\DeclareMathOperator{\sech}{sech}
\DeclareMathOperator{\Isom}{Isom}
\theoremstyle{plain}
 \newtheorem*{thm}{\sc Main Construction}
 \newtheorem{lem}{Lemma}[section]
\theoremstyle{definition}
\theoremstyle{remark}
 \newtheorem{rem}{Remark}[section]
 \newtheorem{ques}{Question}[section]
 \newtheorem{conj}{Conjecture}[section]
 \numberwithin{equation}{section}
\renewcommand{\leq}{\leqslant}
\renewcommand{\geq}{\geqslant}
\title[Manifolds With Many Hyperbolic Planes]{Manifolds With Many Hyperbolic Planes}
\subjclass[2010]{}
\keywords{}
\author[Samuel Lin and Benjamin Schmidt]{ Samuel Lin and Benjamin Schmidt}
\address{
Department of Mathematics \\ 
Michigan State university   \\ 
East lansing, MI, 48823\\
}
\email{linsamue@msu.edu and schmidt@math.msu.edu}
\thanks{ } 
\dedicatory{ }
\begin{document}

\vspace{18mm} \setcounter{page}{1} \thispagestyle{empty}

\begin{abstract}
We construct examples of complete Riemannian manifolds having the property that every geodesic lies in a totally geodesic hyperbolic plane.  Despite the abundance of totally geodesic hyperbolic planes, these examples are not locally homogenous.  

\end{abstract}

\maketitle

\section{Introduction}

Hyperbolic $n$-space is the space $\mathbb{H}^n=\{(x_i)\in \mathbb{R}^n \vert\, x_n>0\}$ with the metric $h_n=x_n^{-2}(dx_1^2+\cdots +dx_n^2).$  Each tangent $2$-plane to $(\mathbb{H}^n,h_n)$ exponentiates to a totally-geodesic copy of $(\mathbb{H}^2,h_2)$. Similarly,   hyperbolic surfaces abound in the negatively curved locally symmetric manifolds.  More precisely, these spaces satisfy:\\

\noindent \textbf{(P)}: \textit{Each tangent vector to $M$ is contained in a tangent $2$-plane $\sigma$ that exponentiates to an immersed totally-geodesic hyperbolic surface.}\\

We construct examples of manifolds with (P) that are \textit{not} locally homogeneous.  For $n \geq 2$, let $M^n=\mathbb{R} \times \mathbb{\mathbb{H}}^{n-1}=\{(t,(x_i))\,\vert\, t \in \mathbb{R},\,(x_i)\in\mathbb{H}^{n-1}\}.$    

\begin{thm}\label{mainthm}
For each $(a,b) \in \mathbb{R} \times [-1,1]$, $M^n$ admits a complete Riemannian metric $h_{n,a,b}$ satisfying\begin{enumerate}

\item For fixed $n \geq 2$, the metrics $h_{n,a,b}$ depend smoothly on $(a,b)$.

\item There is a monomorphism $\Isom(\mathbb{H}^{n-1},h_{n-1}) \rightarrow \Isom(M^n,h_{n,a,b}).$

\item Each tangent $2$-plane to $M^n$ containing the tangent vector $\frac{\partial}{\partial t}$ exponentiates to a totally-geodesic surface isometric to the hyperbolic plane.

\item The sectional curvature of a $2$-plane $\sigma$ at $p=(t,(x_i))$ making angle $\theta \in [0,\frac{\pi}{2}]$ with $\frac{\partial}{\partial t}$ is given by $$\sec(\sigma)=-1+\frac{4(1-b^2-e^{2a})}{((1+b)e^t+(1-b)e^{-t})^2}\sin^2(\theta).
$$ 

\item For $n\geq 3$, $(M^n,h_{n,a,b})$ Riemannian covers a finite volume manifold if and only if $1=b^2+e^{2a}$, or equivalently, if and only if $(M^n,h_{n,a,b})$ is isometric to $(\mathbb{H}^n,h_n)$.

\end{enumerate}
\end{thm}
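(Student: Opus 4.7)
\noindent\emph{Proof proposal.} My plan is to realise $h_{n,a,b}$ as a warped product
\[
h_{n,a,b}=dt^2+F(t)^2\,h_{n-1},
\]
letting the formula in (4) dictate the choice of $F$. Setting $\theta=0$ in (4) forces $-F''/F=-1$, so $F''=F$ and $F(t)=Ae^t+Be^{-t}$; matching the coefficient at $\theta=\pi/2$ then uniquely selects
\[
F(t)=\tfrac{1}{2}e^{-a}\bigl((1+b)e^t+(1-b)e^{-t}\bigr),
\]
which is smooth and everywhere positive on $\mathbb{R}$ for each $(a,b)\in\mathbb{R}\times[-1,1]$, so the warped product is complete. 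Parts (1) and (2) are then immediate: $F$ depends smoothly on $(a,b)$, and $\phi\mapsto\mathrm{id}_\mathbb{R}\times\phi$ supplies the required monomorphism $\Isom(\mathbb{H}^{n-1},h_{n-1})\hookrightarrow\Isom(M^n,h_{n,a,b})$.

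For (3) I will invoke the standard result that in a warped product $B\times_f F$, the surface $B\times\gamma$ swept out by a fibre geodesic $\gamma$ is totally geodesic. Applied to a geodesic of $(\mathbb{H}^{n-1},h_{n-1})$ through a prescribed direction, this produces a totally geodesic surface with induced metric $dt^2+F(t)^2\,ds^2$ through every tangent $2$-plane containing $\partial/\partial t$; this surface is complete, simply connected, and has Gauss curvature $-F''/F=-1$, so by uniqueness of the simply connected space form it is isometric to $(\mathbb{H}^2,h_2)$. For (4) I will apply the Bishop--O'Neill warped-product curvature identities to the orthonormal basis $e_1=\cos\theta\,\partial_t+\sin\theta\,V$, $e_2=W$ of a plane $\sigma$ at angle $\theta$, with $V,W$ orthonormal fibre unit vectors. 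The cross term $R(\partial_t,W,V,W)$ vanishes by the identity $R(V,W)\partial_t=0$ for vertical $V,W$, leaving
\[
\sec(\sigma)=\cos^2\theta\bigl(-\tfrac{F''}{F}\bigr)+\sin^2\theta\,\tfrac{-1-(F')^2}{F^2},
\]
and the algebraic identity $F^2-(F')^2=e^{-2a}(1-b^2)$ reduces this to the stated expression.

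Part (5) is where the real work lies. The forward direction is direct: if $b^2+e^{2a}=1$ then (4) gives $\sec\equiv -1$, forcing $(M^n,h_{n,a,b})$ (complete, simply connected, constant curvature $-1$) to be isometric to $(\mathbb{H}^n,h_n)$, which Riemannian covers closed hyperbolic $n$-manifolds. For the converse, assume $b^2+e^{2a}\ne 1$ and that $(M^n,h_{n,a,b})$ Riemannian covers a finite-volume $N$ with deck group $\Gamma$. From (4) I extract the pointwise invariant
\[
\kappa(p)=\frac{4|1-b^2-e^{2a}|}{\bigl((1+b)e^t+(1-b)e^{-t}\bigr)^2},
\]
which depends only on $t$ and is either strictly monotone (when $|b|=1$) or strictly unimodal about a unique maximiser $t_0$ (when $|b|<1$). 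Since $\kappa$ descends to $N$, every $\gamma\in\Gamma$ preserves $\kappa$ and hence the $t$-coordinate up to at most a single reflection $t\mapsto 2t_0-t$. Passing to the index-at-most-$2$ subgroup $\Gamma_0\le\Gamma$ that preserves each $t$-slice, every $\gamma\in\Gamma_0$ acts on each slice as an isometry of the rescaled hyperbolic metric, inducing a faithful homomorphism $\Gamma_0\hookrightarrow\Isom(\mathbb{H}^{n-1},h_{n-1})$ whose image is discrete and acts freely. The quotient volume then factors as
\[
\mathrm{vol}(M^n/\Gamma_0)=\mathrm{vol}(\mathbb{H}^{n-1}/\Gamma_0)\cdot\int_{\mathbb{R}}F(t)^{n-1}\,dt,
\]
and since $F$ grows exponentially in at least one direction the integral diverges, contradicting $\mathrm{vol}(N)<\infty$. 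The main obstacle is isolating this rigid $t$-dependent invariant and verifying that the induced $\Gamma_0$-action on $\mathbb{H}^{n-1}$ is discrete, faithful, and free; once these properties are in place the warped-product volume formula closes the argument.
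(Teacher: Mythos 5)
Your proposal is correct, and it sits inside the same warped-product framework as the paper: the function $F(t)=\tfrac12 e^{-a}\bigl((1+b)e^t+(1-b)e^{-t}\bigr)$ is exactly the paper's $e^{\phi_{a,b}}$, and parts (1), (2), (4) run essentially as in the paper (the paper likewise invokes O'Neill's warped-product curvature identities and the vanishing of the mixed term; your identity $F^2-(F')^2=e^{-2a}(1-b^2)$ is the algebraic heart of (4)). Where you genuinely diverge is in (3) and in the converse half of (5). For (3), the paper exhibits one totally geodesic copy of $\mathbb{H}^2$ as the fixed-point set of the isometry induced by $(x_1,\ldots,x_{n-1})\mapsto(-x_1,\ldots,-x_{n-2},x_{n-1})$, identifies it with the $n=2$ model and then transports it onto an arbitrary plane containing $\partial_t$ via the transitive action of $\Isom(\mathbb{H}^{n-1},h_{n-1})$ on unit tangent vectors; you instead use the fact that $B\times_f F'$ is totally geodesic whenever $F'$ is totally geodesic in the fibre, applied to fibre geodesics, and compute the Gauss curvature $-F''/F=-1$ directly. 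Both are valid; yours is more direct and absorbs the paper's Lemma~2.1, while the paper's reuses the symmetry machinery already set up for (2). For (5), both arguments extract from (4) the $t$-dependent isometry invariant (your $\kappa$ is the curvature spread at a point) to reduce to an index-at-most-two subgroup $\Gamma_0$ acting trivially on the $\mathbb{R}$-factor; after that the paper cites Bishop--O'Neill (a nonconstant convex function on a negatively curved quotient forces infinite volume), whereas you compute $\mathrm{vol}(M^n/\Gamma_0)=\mathrm{vol}(\mathbb{H}^{n-1}/\Gamma_0)\int_{\mathbb{R}}F(t)^{n-1}\,dt=\infty$ by Fubini, using that $F$ grows exponentially in at least one direction and that a properly discontinuous quotient has positive volume. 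Your route is more elementary and self-contained; the one step you should write out is that an element of $\Gamma_0$ preserving every $t$-slice acts by the \emph{same} hyperbolic isometry on every slice (it fixes $t$ and carries the unit-speed geodesics $\mathbb{R}\times\{x\}$ to geodesics with the same $t$-coordinate, hence to vertical lines), which is what legitimizes the product factorization of the quotient and its volume.
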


\begin{rem}
Property (3) implies (P). 
\end{rem}

\begin{rem}\label{path}
For $n \geq 3$, the Riemannian metric $h_{n,a,0}$ has extremal sectional curvatures $-1$ and $-e^{2a}$ by (4);  consequently, manifolds satisfying (P) can have curvatures pinched arbitrarily close to $-1$ from above or below without being isometric to hyperbolic space. 
\end{rem}

The Main Construction is motivated by the study of \textit{rank rigidity} \cite{MR819559}-\cite{Bettiol:2014aa}, \cite{MR908215}-\cite{MR1860505},  \cite{MR3493370}-\cite{MR3054631}, and in particular in manifolds of \textit{positive hyperbolic rank} \cite{MR1934699, MR2449143, MR1114460, Lin:2016aa}.  A complete Riemannian manifold $M$ has \textit{positive hyperbolic rank} if along each complete geodesic $\gamma:\mathbb{R} \rightarrow M$, there exists a parallel vector field $V(t)$ such that $\sec(\dot{\gamma},V)(t) \equiv -1$.  This hypothesis on geodesics is an infinitesimal analogue of (P).

The negatively curved locally symmetric spaces, normalized so that $-1$ is an extremal value of the sectional curvatures (i.e. an achieved lower or upper bound), have positive hyperbolic rank.  While there are additional examples of manifolds with positive hyperbolic rank (such as in the Main Construction), hyperbolic rank-rigidity results assert that under additional mild hypotheses, the negatively curved locally symmetric spaces constitute all examples.

The known examples of complete and \textit{finite volume} manifolds of higher hyperbolic rank are all locally symmetric.  There are no additional complete and finite volume examples amongst manifolds having $\sec \leq -1$ \cite{MR1114460}.  The same is known to be true amongst the complete and finite volume non-positively curved Euclidean rank one manifolds with suitably pinched curvatures \cite{MR2449143}.  Finally, complete and finite volume three-dimensional manifolds of higher hyperbolic rank are real hyperbolic without any a priori sectional curvature bounds \cite{Lin:2016aa}.   

In contrast, it is known that complete \textit{infinite volume} manifolds of higher hyperbolic rank need not be locally symmetric .  The negatively curved symmetric spaces are characterized amongst the homogeneous manifolds with positive hyperbolic rank and $\sec \leq -1$ in  \cite{MR1934699}. Therein, a non-symmetric example is constructed.  To our knowledge, the examples provided by the Main Construction are the first that are \textit{not} locally homogenous.

\section{Warping hyperbolic $1$-space over the Euclidean line.}

Let $t$ denote the Euclidean coordinate on $\mathbb{R}$ and $r>0$ the Euclidean coordinate on $\mathbb{H}^1$.  

Consider the foliation of $\mathbb{H}^2$ by the family of (Euclidean) upper-half semicircles with common center the origin in $\mathbb{R}^2$.  Each such semicircle, parameterized appropriately, is an $h_2$-geodesic: For each $r>0$, the map $$F_r:\mathbb{R} \rightarrow \mathbb{H}^2$$ defined by $F_r(t)= (r\tanh(t),r\sech(t))$ parameterizes the semicircle through $(0,r)$ in the clockwise fashion as a unit-speed $h_2$-geodesic with the initial point $(0,r)$.

Define a diffeomorphism $$F:\mathbb{R} \times \mathbb{H}^1 \rightarrow \mathbb{H}^2$$ by $F(t,r)=F_r(t)$.  In the coordinates $(t,r)$ on $\mathbb{R} \times \mathbb{H}^1$, the metric $F^{*}(h_2)$ is given by $$F^{*}(h_2)=dt^2+\cosh^2(t)h_1=dt^2+e^{2 \ln(\cosh(t))}h_1.$$ Hence, $(\mathbb{H}^2,h_2)$ is isometric to a warped product of the hyperbolic line over the Euclidean line.  

In fact, the hyperbolic plane $\mathbb{H}^2$ is isometric to many different warpings of $\mathbb{H}^1$ over $\mathbb{R}$.  The warping functions are described in terms of solutions to the second order differential equation 

\begin{equation}\label{jacobi}
\phi''(t)+(\phi'(t))^2-1=0.
\end{equation} 

For $a \in \mathbb{R}$ and $-1\leq b \leq 1$, the solution to (\ref{jacobi}) determined by the initial conditions $\phi(0)=-a$ and $\phi'(0)=b$ is given by 

\begin{equation}\label{soln}
\phi_{a,b}(t)=\ln(\frac{(1+b)e^t+(1-b)e^{-t}}{2e^{a}}).
\end{equation}

Let $M^2=\mathbb{R} \times \mathbb{H}^1$ and let $\pi:M^2 \rightarrow \mathbb{R}$ be the first coordinate projection.  Ignore the slight abuse of notation in letting $\phi_{a,b}$ also denote the function $\pi^{*}(\phi_{a,b})$ on $M^2$.

For $a$ and $b$ as above, consider the warped product Riemannian metric $$h_{2,a,b}=dt^2+e^{2\phi_{a,b}}h_1$$ on $M^2$.  In this notation, $h_{2,0,0}=F^{*}(h_2).$ 

\begin{lem}\label{nonew}
For each $(a,b) \in \mathbb{R} \times [-1,1]$, $(M^2,h_{2,a,b})$ is isometric to the hyperbolic plane $(\mathbb{H}^2,h_2)$.  
\end{lem}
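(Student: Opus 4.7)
My plan is to exhibit an explicit global isometry $(M^2, h_{2,a,b}) \to (\mathbb{H}^2, h_2)$ by reducing $h_{2,a,b}$ via an affine change of coordinates to one of two standard warped-product representations of $h_2$; the two cases correspond to $|b|<1$ versus $|b|=1$.

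The preparatory step is to rewrite the ODE (\ref{jacobi}) in terms of the warping function itself. Setting $f(t):=e^{\phi_{a,b}(t)}$, one has $f'' = (\phi'' + (\phi')^2)\,f$, so (\ref{jacobi}) is equivalent to $f'' = f$, whose general solution is $f(t) = Ae^t + Be^{-t}$. Matching $\phi(0) = -a$ and $\phi'(0) = b$ forces $A = (1+b)e^{-a}/2$ and $B = (1-b)e^{-a}/2$, both non-negative when $b \in [-1,1]$ and not simultaneously zero, so $f > 0$ on $\mathbb{R}$. In the coordinate $u = \ln r$ on $\mathbb{H}^1$ we have $h_1 = du^2$, and hence $h_{2,a,b} = dt^2 + f(t)^2\,du^2$.

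For $|b| < 1$ both $A, B > 0$, and I would write $f(t) = C\cosh(t - t_0)$ with $C = 2\sqrt{AB} = \sqrt{1-b^2}\,e^{-a}$ and $t_0 = \tfrac{1}{2}\ln(B/A)$. The affine substitution $\tilde t = t - t_0$, $\tilde u = Cu$ then transforms $h_{2,a,b}$ into $d\tilde t^2 + \cosh^2(\tilde t)\,d\tilde u^2 = F^{*}(h_2)$, where $F$ is the diffeomorphism from the excerpt. For $|b| = 1$ exactly one of $A, B$ vanishes, so $f(t) = e^{-a}e^{\pm t}$ and $h_{2,a,\pm 1} = dt^2 + e^{2(\pm t - a)}\,du^2$; the substitution $s = \pm t - a$, $x = u$ puts this into the horocyclic form $ds^2 + e^{2s}\,dx^2$, which is the pullback of $h_2$ under the global diffeomorphism $\mathbb{R} \times \mathbb{R} \to \mathbb{H}^2$, $(s,x) \mapsto (x, e^{-s})$.

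The main obstacle, beyond routine verification, is recognizing the dichotomy between the two geometric pictures: for $|b| < 1$ the metric is given in Fermi coordinates about the geodesic $\{t = t_0\}$, while at the boundary $|b| = 1$ one of the two exponential modes drops out and the metric sits in horocyclic coordinates. Both arise from the same algebraic form $f = Ae^t + Be^{-t}$, but require different affine normalizations to reach a standard model of $\mathbb{H}^2$. Notably, no separate completeness argument is needed, because each coordinate change is a global diffeomorphism, so the composition is an explicit isometry onto the complete space $(\mathbb{H}^2, h_2)$.
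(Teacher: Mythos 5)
Your proof is correct, and it takes a genuinely different route from the paper. The paper's argument is indirect: it cites O'Neill for completeness of the warped product, computes the sectional curvature of $dt^2+e^{2\phi}h_1$ as $-\phi''-(\phi')^2$ via the warped-product curvature formulas, observes that this is identically $-1$ by the ODE (\ref{jacobi}), and then (implicitly using that $M^2$ is simply connected) invokes the uniqueness of the complete simply connected surface of constant curvature $-1$. You instead linearize the problem by passing to $f=e^{\phi}$, solving $f''=f$ explicitly, and normalizing $f=Ae^t+Be^{-t}$ by an affine change of coordinates into either the Fermi form $d\tilde t^2+\cosh^2(\tilde t)\,d\tilde u^2$ (when $|b|<1$) or the horocyclic form $ds^2+e^{2s}dx^2$ (when $|b|=1$), producing an explicit global isometry; all your computations check out, including the values $A=(1+b)e^{-a}/2$, $B=(1-b)e^{-a}/2$, $C=\sqrt{1-b^2}\,e^{-a}$, and the pullback of $h_2$ under $(s,x)\mapsto(x,e^{-s})$. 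What your approach buys is explicitness and independence from the classification of space forms (and, as you note, from any separate completeness argument), together with a clear geometric picture of the two regimes --- Fermi coordinates about the geodesic $t=t_0$ versus horospherical coordinates at the boundary cases $b=\pm1$, which foreshadows the critical point $t_0=\ln\sqrt{1-b}-\ln\sqrt{1+b}$ appearing in the paper's proof of Property (5) and the horospherical picture of Remark 4.1. What the paper's approach buys is uniformity in $(a,b)$ with no case split, and the curvature computation it performs is reused anyway in establishing Property (4), so nothing is wasted.
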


\begin{proof}
The proof only uses the fact that $\phi_{a,b}$ is a solution to (\ref{jacobi}).  For this reason, and to simplify notation, set $h=h_{2,a,b}$ and $\phi=\phi_{a,b}$ in the remainder of this proof.   As $(\mathbb{R},dt^2)$ and $(\mathbb{H}^1,h_1)$ are complete, so too is their warped product $(M^2, h)$ \cite[Lemma 40, pg. 209]{MR719023}.  Therefore, it suffices to prove that $h$ has constant curvature $-1$.

The curvature at each point equals $-1$ as a consequence of (\ref{jacobi}) since by \cite[Proposition 42 (4), pg. 210]{MR719023}, the sectional curvature at a point is given by $$-\phi''-(\phi')^2.$$


\end{proof}

\section{Main Construction: Warping hyperbolic $(n-1)$-space over the Euclidean line.}

Fix $n \geq 2$ and  $(a,b) \in \mathbb{R} \times [-1,1]$. Let $\phi_{a,b}$ be as in $(\ref{soln})$.  Let $h_{n,a,b}$ denote the warped product Riemannian metric on $M^n=\mathbb{R} \times \mathbb{H}^{n-1}$ defined by  $$h_{n,a,b}=dt^2+e^{2\phi_{a,b}}h_{n-1}.$$  The metric $h_{n,a,b}$ is complete by \cite[Lemma 40, pg. 209]{MR719023}  We now consider the properties (1)-(5) stated in the Main Construction.\\

\noindent \textbf{Property (1):} Property (1) is immediate since the warping functions $\phi_{a,b}$ depend smoothly on $(a,b) \in \mathbb{R} \times [-1,1]$.\\

\noindent \textbf{Property (2):} Given $F \in \Isom(\mathbb{H}^{n-1},h_{n-1})$, define $\bar{F}:M^n \rightarrow M^n$ by $\bar{F}(t,(x_i))=(t,F(x_i)).$  Then $\bar{F} \in \Isom(M^n,h_{n,a,b})$ and the map $$\Isom(\mathbb{H}^{n-1},h_{n-1}) \rightarrow \Isom(M^n,h_{n,a,b})$$ defined by $F \mapsto \bar{F}$ is a monomorphism.\\

\noindent \textbf{Property (3):} To find one such totally-geodesic hyperbolic plane, define $R \in \Isom(\mathbb{H}^{n-1},h_{n-1})$ by $$R(x_1,\ldots,x_{n-2}, x_{n-1})=(-x_1,\ldots, -x_{n-2}, x_{n-1}).$$  As $\bar{R}$ is an isometry, its fixed point set $$\Sigma=\{(t,(0,\ldots,0,x_{n-1}))\}\subset M^n$$ is a totally-geodesic surface.  The map $(t,(0,\ldots,0,x_{n-1})) \mapsto (t,x_{n-1})$ defines an isometry between the induced metric on $\Sigma$ and $(M^2,h_{2,a,b})$.  By Lemma \ref{nonew}, $\Sigma$ is isometric to the hyperbolic plane.  We conclude by showing that given an arbitrary tangent plane $\sigma$ as in Property (3) there is an isometry of $M^n$ that carries a tangent plane to $\Sigma$ to $\sigma$.

Let $\pi:M^n \rightarrow \mathbb{R}$ denote the first coordinate projection and $F_{\bar{t}}$ the fiber above $\bar{t}\in \mathbb{R}$.  For each $p \in F_{\bar{t}}$, let $G_p$ denote the set of tangent $2$-planes to $M^n$ at $p$ that contain the vector $\frac{\partial}{\partial t}(p)$.  Let $$X_{\bar{t}}=\cup_{p \in F_{\bar{t}}} G_p.$$   Let $\sigma_0$ denote the tangent space to $\Sigma$ at the point $(\bar{t},(0,\ldots,0,1))$ and note that $\sigma_0 \in X_{\bar{t}}$.  As $\Isom(\mathbb{H}^{n-1},h_{n-1})$ acts transitively on unit-tangent vectors to $\mathbb{H}^{n-1}$, $\Isom(M^n,h_{n,a,b})$ acts transitively on the set $X_{\bar{t}}$.  Hence, there exists an isometry $\bar{I}$ of $M^n$ that carries $\sigma_0$ to $\sigma$.  

As $$\exp(\sigma)=\exp(d\bar{I}(\sigma_0))=\bar{I}(\exp(\sigma_0))=\bar{I}(\Sigma),$$ $\exp(\sigma)$ is a totally geodesic surface isometric to the hyperbolic plane.\\

\noindent \textbf{Property (4):}  Let $p=(t,(x_i)) \in M$ and let $v,w,\frac{\partial}{\partial t}$ be orthonormal vectors at $p$.

Property (3) implies that 
\begin{equation}\label{tang}
R(\frac{\partial}{\partial t},w,w,\frac{\partial}{\partial t})=-1.
\end{equation}

By  \cite[Proposition 42 (5), pg. 210]{MR719023},
\begin{equation}\label{perp}
R(v,w,w,v)=-1+\frac{4(1-b^2-e^{2a})}{((1+b)e^t+(1-b)e^{-t})^2}.
\end{equation}

By \cite[Proposition 42 (3), pg. 210]{MR719023},
\begin{equation}\label{mix}
R(\frac{\partial}{\partial t},w)v=0.
\end{equation}










Now fix a two dimensional subspace $\sigma \subset T_pM$.  Assume that $\sigma$ makes angle $\theta \in [0, \frac{\pi}{2}]$ with $\frac{\partial}{\partial t}$.  Then there exist unit length vectors $v$ and $w$ perpendicular to $\frac{\partial}{\partial t}$ such that   $$\{\bar{v}=\cos(\theta)\frac{\partial}{\partial t}+\sin(\theta)v,w\} $$ is an orthonormal basis of $\sigma$.  By (\ref{tang})-(\ref{mix}) and the symmetries of the curvature tensor, 
\begin{equation*}
\begin{split}
\sec(\sigma)&=R(\bar{v},w,w,\bar{v})\\
&=\cos^2(\theta)R(\frac{\partial}{\partial t},w,w,\frac{\partial}{\partial t})+\sin^2(\theta)R(v,w,w,v)\\
&=-1+\frac{4(1-b^2-e^{2a})}{((1+b)e^t+(1-b)e^{-t})^2}\sin^2(\theta).
\end{split}
\end{equation*}

\noindent \textbf{Property (5):} As $(M^n,h_{n,a,b})$ is simply connected and complete, it is isometric to $(\mathbb{H}^n,h_n)$ if and only if it has constant sectional curvatures $-1$.  By (4), this is equivalent to the parameters satisfying $1=b^2+e^{2a}.$ As $(\mathbb{H}^n,h_n)$ Riemannian covers finite volume manifolds, it remains to prove that when $n \geq 3$ and $1\neq b^2+e^{2a}$, $(M^n,h_{n,a,b})$ does not Riemannian cover a finite volume manifold.

Assume that $1 \neq b^2+e^{2a}$.  As isometries preserve sectional curvatures, Property (4) implies that each isometry $F \in \Isom(M^n,h_{n,a,b})$ has the form $F((t,x_i))=(F_1(t), F_2(x_i))$ for some $F_1 \in \Isom(\mathbb{R},dt^2)$ and $F_2\in \Isom(\mathbb{H}^{n-1},h_{n-1})$.  If $b=\pm1$, then $F_1$ is the identity.  If $b \neq \pm 1$, then $F_1$ is either the identity or reflection about the critical point $t_0=\ln(\sqrt{1-b})-\ln(\sqrt{1+b})$ of $((1+b)e^t+(1-b)e^{-t})^2$.

Let $(N,g)$ be a Riemannian manifold with universal covering $(M^n,h_{n,a,b})$.  Its fundamental group $\pi_1(N)$ is identified with a subgroup $\Gamma$ of $\Isom(M,h_{n,a,b})$ that acts properly discontinuously and fixed point freely on $M^n$.  The association $\gamma \mapsto \gamma_1$ described above defines a homomorphism $\Phi: \Gamma \rightarrow \Isom(\mathbb{R},dt^2)$ whose kernel has index at most two in $\Gamma$.  Let $(\bar{N},\bar{g})$ be the Riemannian covering of $(N,g)$ associated to the subgroup $\ker \Phi$ of $\Gamma$.   

As each isometry in $\ker \Phi$ acts trivially on the first factor of $M^n=\mathbb{R} \times \mathbb{H}^{n-1}$,  the metric $\bar{g}$ is also a warped product metric with the same warping function $e^{\phi_{n,a,b}}$.  By (4), $\bar{g}$ has negative sectional curvatures.  By \cite[Lemma 7.6]{MR0251664}, $e^{\phi_{n,a,b}}$ is a non-constant convex function on $(\bar{N},\bar{g})$.  By \cite[Proposition 2.2]{MR0251664}, $(\bar{N},\bar{g})$ has infinite volume.  Therefore, $(N,g)$ has infinite volume, as required.

\section{Concluding remarks and questions.}

\begin{rem}\label{euc}
For $n \geq 1$, let $g_n$ denote the Euclidean metric on $\mathbb{R}^n$.  An analogous construction replacing $(\mathbb{H}^{n-1},h_{n-1})$ with $(\mathbb{R}^{n-1},g_{n-1})$ produces complete nonpositively curved metrics on $\mathbb{R}^n$ satisfying (P).  In this setting, the parameters $(a,b)=(0,1)$  correspond to the well-known warped product expression for $\mathbb{H}^n$ arising from normal coordinates about the horosphere through $(0,\ldots,0,1) \in \mathbb{H}^{n-1}$. 
\end{rem}

\begin{ques}
Are all complete Riemannian metrics on $M^3$ that satisfy (P) isometric to a metric as described in the Main Construction or in Remark \ref{euc}?
\end{ques}

\begin{ques}
The constant curvature metric belongs to a one parameter family of pairwise non-isometric metrics on the manifold $M^n$ each of which satisfies (P) as in Remark \ref{path}.  Are the symmetric metrics on the other negatively curved symmetric spaces similarly deformable through metrics satisfying (P)?
\end{ques}

In view of the existing hyperbolic rank-rigidity results for finite volume manifolds, it seems reasonable to make the following conjecture: 

\begin{conj}
A finite volume Riemannian manifold that satisfies (P) is locally symmetric.
\end{conj}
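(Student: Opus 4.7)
The plan is to reduce the conjecture to the hyperbolic rank-rigidity theorem of \cite{MR1114460}, which asserts that a finite volume Riemannian manifold with $\sec \le -1$ and positive hyperbolic rank is locally symmetric. Property (P) immediately implies positive hyperbolic rank: along any complete geodesic $\gamma$, choose a totally geodesic hyperbolic plane tangent to a 2-plane containing $\dot\gamma(0)$; the unit normal to $\dot\gamma$ inside this plane extends as a parallel vector field along $\gamma$ in $M$ with sectional curvature identically $-1$ against $\dot\gamma$. Moreover, (P) forces $-1$ to be achieved as the sectional curvature of some 2-plane through every point. The essential missing ingredient for invoking \cite{MR1114460} is therefore the global one-sided bound $\sec \le -1$.

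I would attempt to prove this bound by combining a local algebraic identity with a global dynamical argument. The local input comes from Gauss equations applied to the abundant family of totally geodesic hyperbolic planes through each point supplied by (P). Varying the tangent hyperbolic plane through $p \in M$ and tracking the resulting constraints on the curvature operator of $T_pM$ should yield a rigid algebraic identity in the spirit of equation (\ref{perp}), but valid without any warped-product ansatz. The global input is Poincar\'e recurrence: on a finite volume manifold the geodesic flow returns to every open set, so if $\sec > -1$ on some 2-plane, one can produce a recurring geodesic along which the excess $\sec + 1 > 0$ is attained densely, and then apply (P) at accumulation points to derive a contradiction with the local identity. The three dimensional case is already settled without recourse to curvature bounds in \cite{Lin:2016aa}, so the proposal is specifically aimed at $\dim M \ge 4$.

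The main obstacle is producing the inequality $\sec \le -1$. The Main Construction exhibits infinite volume examples whose sectional curvatures lie strictly on either side of $-1$ along a single geodesic direction, so the finite volume hypothesis must enter in an essentially nonperturbative fashion, presumably through recurrence of the geodesic flow rather than any purely infinitesimal argument. A secondary difficulty is that the standard dynamical tools presume negative sectional curvature at the outset; a preliminary step will be to use the totally geodesic hyperbolic planes guaranteed by (P) to rule out nonnegative sectional curvatures directly, for instance by constructing Jacobi fields transverse to these planes whose forward and backward decay forces negativity in every direction.
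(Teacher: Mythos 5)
This statement is posed in the paper as an open conjecture; the paper offers no proof, and your proposal does not supply one either. The reduction you describe --- (P) implies positive hyperbolic rank, hence \cite{MR1114460} applies once $\sec \le -1$ is known --- is sound as far as it goes (the unit normal to $\dot\gamma$ inside a totally geodesic surface is indeed parallel in the ambient manifold, so (P) does imply positive hyperbolic rank), but the entire content of the conjecture is then concentrated in the step you label as the ``main obstacle,'' namely establishing the one-sided bound $\sec \le -1$, and that step is not carried out. The ``rigid algebraic identity'' you hope to extract from the Gauss equations cannot exist as a purely local statement: the Gauss equation applied to a totally geodesic hyperbolic plane only constrains the curvature of the 2-planes tangent to that surface, and (P) supplies just one such surface per tangent \emph{vector}, not per tangent 2-plane. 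The Main Construction itself is the counterexample to any local approach --- it satisfies (P) while having sectional curvatures on both sides of $-1$ --- so, as you note, finite volume must enter globally. But the recurrence argument you sketch does not close the loop: Poincar\'e recurrence tells you a geodesic with excess curvature $\sec+1>0$ returns near itself, yet you never explain what contradiction this produces with (P) at the accumulation points. Without that, nothing is proved.

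A secondary issue is the appeal to \cite{MR1114460} at the end: that theorem is stated for manifolds with $\sec \le -1$ and positive hyperbolic rank, so even granting your curvature bound you would still need to verify that the finite volume (rather than compact) version applies in the generality you need, and your preliminary step of ``ruling out nonnegative sectional curvatures directly'' by Jacobi field decay is itself unsubstantiated --- transverse Jacobi fields along a geodesic lying in a hyperbolic plane are governed by the ambient curvature in the transverse directions, which is exactly the quantity you have no control over. In short, your proposal correctly identifies the shape a proof might take and correctly locates where the difficulty lies, but it does not resolve the conjecture; the statement remains open.
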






\bibliographystyle{plain}
\bibliography{Rank}

\end{document}